\newcommand\nthalias[1]{\AddToHook{env/#1/begin}{\crefalias{lemma}{#1}}}
\crefname{section}{Section}{Sections}
\crefname{subsection}{\S}{\S\S}
\crefname{subsubsection}{\S}{\S\S}
\theoremstyle{plain}
\newtheorem{lemma}{Lemma}[section]
\newtheorem{proposition}[lemma]{Proposition}
\newtheorem{theorem}[lemma]{Theorem}
\theoremstyle{plain}
\theoremstyle{plain}
\newtheorem{remark}[lemma]{Remark}
\newtheorem{remarks}[lemma]{Remarks}
\newtheorem{notation}[lemma]{Notation}
\newtheorem{sketch}[lemma]{Sketch}
\crefname{definition}{definition}{definitions}
\crefname{ex}{example}{examples}
\crefname{exs}{example}{examples}
\crefname{remark}{remark}{remarks}
\crefname{remarks}{remark}{remarks}
\crefname{convention}{convention}{conventions}
\crefname{notation}{notation}{notations}
\crefname{table}{table}{tables}
\crefname{lemma}{lemma}{lemmas}
\crefname{proposition}{proposition}{propositions}
\crefname{propositionN}{proposition}{propositions}
\crefname{corollary}{corollary}{corollaries}
\crefname{corollaryN}{corollary}{corollaries}
\crefname{theorem}{theorem}{theorems}
\crefname{theoremN}{theorem}{theorems}
\crefname{enumi}{}{}
\crefname{assumption}{assumption}{Assumptions}
\crefname{construction}{construction}{Constructions}
\crefname{sketch}{sketch}{Sketches}
\crefname{question}{question}{Questions}
\crefname{equation}{}{}
\numberwithin{equation}{section}
\theoremstyle{nonumberplain}
\newtheorem{proof}{Proof}
\newcommand\pf[1]{\newtheorem{#1}{Proof of \Cref{#1}}}
\newcommand\bC{{\mathbb C}}
\newcommand\bD{{\mathbb D}}
\newcommand\bR{{\mathbb R}}
\newcommand\bS{{\mathbb S}}
\newcommand\bZ{{\mathbb Z}}
\DeclareMathOperator{\id}{id}
\DeclareMathOperator{\spn}{\mathrm{spn}}
\DeclareMathOperator{\ex}{\mathrm{ex}}
\DeclareMathOperator{\cvx}{\mathrm{cvx}}
\newcommand{\cat}[1]{\textsc{#1}}
\newcommand{\qedhere}{\mbox{}\hfill\ensuremath{\blacksquare}}
\renewcommand{\square}{\mathrel{\Box}}
\newcommand{\xrightarrowdbl}[2][]{%
  \xrightarrow[#1]{#2}\mathrel{\mkern-14mu}\rightarrow
}
\title{Spans and convex combinations of boundary-valued continuous functions}
\author{Alexandru Chirvasitu}
\begin{document}

\date{}

\newcommand{\Addresses}{{
  \bigskip
  \footnotesize

  \textsc{Department of Mathematics, University at Buffalo}
  \par\nopagebreak
  \textsc{Buffalo, NY 14260-2900, USA}  
  \par\nopagebreak
  \textit{E-mail address}: \texttt{achirvas@buffalo.edu}


}}

\maketitle

\begin{abstract}  
  For an $(n\ge 2)$-dimensional real Banach space $E$ with unit ball $E_{\le 1}$ and a topological space $X$ arbitrary elements in $C(X,E_{\le 1})$ are always expressible as linear combinations of at most three functions valued in the unit sphere $\partial E_{\le 1}$. On the other hand, for normal $X$, $C(X,E_{\le 1})$ can only be the convex hull of $C(X,\partial E_{\le 1})$ if the covering dimension of $X$ is strictly smaller than $\dim E$. A variant of this remark is the characterization of normal $X$ with $\dim X<\dim E$ as precisely those for which $C(X,E_{\le 1})$ is the convex hull of nowhere-vanishing continuous $X\to E_{\le 1}$ or, equivalently, that of continuous functions $X\to E_{[r,1]}$, $r\in (0,1)$ valued in arbitrarily thin spherical shells. 
  
  This extends a number of results due to Peck, Cantwell, Bogachev, Mena-Jurado, Navarro-Pascual and Jim\'enez-Vargas and others revolving around the realizability of the unit ball of $C(X,E)$ as a convex hull of its extreme points for strictly convex and/or complex $E$. 
\end{abstract}

\noindent
\emph{Key words:
  Banach space;
  Krein-Milman invariants;
  convex body;
  convex hull;
  extreme point;
  polyhedron;
  skeleton;
  topological degree
}

\vspace{.5cm}

\noindent{MSC 2020: 46E15; 52A21; 55M25; 54F45; 54C65; 52A07; 54D15; 55P05

  
}


\section*{Introduction}

Let $X$ be a topological space, $E$ a Banach space (exclusively finite-dimensional in the sequel) and $C_b(X,E)$ the uniform-normed Banach space of continuous bounded functions on the former valued in the latter. The material below revolves around representing elements in the unit ball $C_b(X,E)_{\le 1}=C(X,E_{\le 1})$ as convex and/or linear combinations of continuous functions valued in the bounding sphere $\partial E_{\le 1}$.

By way of a motivating entry point, recall \cite[Exercise 1.7]{dgls_ban_2e_1998} that for complex 1-dimensional $E$ and compact Hausdorff $X$ the function space $C(X):=C(X,\bC)$ is the span of its unit ball's \emph{extreme points} \cite[Definition V.7.1]{dgls_ban_2e_1998} (\Cref{sk:4extr} recalls one justification for the claim).

Adjacent problems have been the focus of much literature: \cite{MR1204371,MR229004,MR1458516,MR1338691,MR262804,MR1469100,MR208357,MR238281,zbMATH03225602}, for instance, examine from various angles and with varying assumptions whether and to what extent the unit ball $C(X,E_{\le 1})$ can be recovered as the (closed or plain) convex hull of its set of extreme points. Much of that literature focuses on the case of \emph{strictly convex} $E$ in the sense of \cite[Exercise 7.104(b)]{nb_tvs}: the extreme points of $C(X,E_{\le 1})$ are then precisely identifiable with the unit-sphere-valued functions, bringing the preceding discussion in scope. Strict convexity does play an important role in many of the results and techniques, manifesting principally (\cite[Proposition 3 and/or Lemma 3 ]{MR238281}, \Cref{pr:extr.pt.spn} below, etc.) as the uniqueness, for any $v\in E$ with $0<\|v\|\le 1$, of the chord of the sphere $\partial E_{\le 1}$ bisected (midway) by $v$.

Some convenient conventions, notation and terminology preamble a summary of the results below and how they relate to some of the referenced literature.

Topological spaces are always assumed Hausdorff and will always be sufficiently well-behaved in ways to be spelled out in individual results: \emph{normal}, \emph{completely regular}, \emph{paracompact} \cite[Definitions 14.8, 15.1, 20.6]{wil_top}, etc. $C(X,Y)$ denotes the space of continuous functions $X\to Y$, with $C(X):=C(X,\bC)$; this will apply mostly to compact Hausdorff $X$. For more general spaces $X$ and metric $Y$ a `$b$' subscript as in $C_b(X,Y)$ indicates boundedness (naturally, automatic for compact $X$). 

\begin{notation}\label{not:spns}
  \begin{enumerate}[(1),wide]
  \item We will consider both linear and convex combinations in (real or complex) Banach spaces. The Banach spaces of interest will be of the form $C_b(X,E)$ with $X$ (Hausdorff) normal and $E$ finite-dimensional Banach, with unit ball $E_{\le 1}$. The latter notation extends in self-explanatory ways ($E_{\le R}$, $E_{\ge 1}$, etc.), we also occasionally write
    \begin{equation*}
      \forall\left(S\subseteq \bR\right)
      \ :\ 
      E_{S}
      :=
      \left\{v\in E\ :\ \|v\|\in S\right\}.
    \end{equation*}
    The aforementioned combinations are of either \emph{boundary} points of the unit ball $C_b\left(X,E_{\le 1}\right)$ (elements $C_b\left(X,\partial E_{\le 1}\right)$) or \emph{extreme} points of that same ball (elements of $C_b(X,\ex E_{\le 1})$).

  \item Spans and \emph{convex hulls} \cite[p.2]{schn_cvx_2e_2014} are denoted by `$\spn$' and `$\cvx$' respectively, while `$\ex$' means extreme points.

  \item We can now introduce various \emph{Krein-Milman invariants}
    \begin{equation*}
      \cat{KM}^{\bullet}_{\square}
      ,\quad
      \bullet\in\{\cvx,\ \spn\}
      ,\quad
      \square\in\{\ex,\ \partial\}
    \end{equation*}
    (so named for the celebrated \cite[Theorem V.7.4]{conw_fa}, to the effect that compact convex subsets of locally convex topological vector spaces are closed convex hulls of their extreme points):

    \begin{equation*}
      \begin{aligned}
        \cat{KM}^{\bullet}_{\square;E}(X,f)
        &:=
          \inf
          \left\{
          n
          \ :\
          \exists \left(f_i\right)_{i=1}^n\subset \ex C_b\left(X,\square E_{\le 1}\right)
          ,\
          f\in\bullet\left(f_i\right)
          \right\}\\
        \cat{KM}^{\bullet}_{\square;E}(X)
        &:=
          \sup_{f\in C_b\left(X,E_{\le 1}\right)}\cat{KM}^{\bullet}_{\square;E}(X,f)\\
        \cat{KM}^{\bullet}_{\square;E}
        &:=
          \sup_{\text{compact Hausdorff }X}\cat{KM}^{\bullet}_{\square;E}(X).
      \end{aligned}    
    \end{equation*}
  \end{enumerate}
  We will be working mostly with real (finite-dimensional) Banach spaces, as it is not difficult to reduce the span-flavored results based on $n$-dimensional complex $E$ to their counterparts for $E$ regarded as a real $2n$-space instead. 
\end{notation}

Throughout, `$\dim$' refers to a space's \emph{covering dimension} \cite[Definition 1.6.7]{eng_dim}. 

\begin{theorem}\label{th:2d.3pts}
  Let $E$ be an $n$-dimensional real Banach space, $n\in \bZ_{\ge 1}$. 
  \begin{enumerate}[(1),wide]
  \item\label{item:th:2d.3pts:dim.constr} For normal Hausdorff $X$ we have
    \begin{equation*}
      \cat{KM}^{\cvx}_{\partial;E}(X)<\infty
      \xRightarrow{\quad}
      \dim X<n=\dim E.
    \end{equation*}
    
  \item\label{item:th:2d.3pts:cvx}
    We have
    \begin{equation*}
      \cat{KM}^{\cvx}_{\ex;E}
      =
      \cat{KM}^{\cvx}_{\partial;E}
      =\infty.
    \end{equation*}

  \item\label{item:th:2d.3pts:linbig.n2} We have
    \begin{equation}\label{eq:all.n.ge3}
      \cat{KM}^{\spn}_{\ex;E}
      \ge
      \cat{KM}^{\spn}_{\partial;E}
      \ge
      3.
    \end{equation}
    
  \item\label{item:th:2d.3pts:lin.n2} For $n\ge 2$ the second inequality in \Cref{eq:all.n.ge3} is in fact an equality. In particular, if $E$ is furthermore strictly convex then \Cref{eq:all.n.ge3} is a chain of equalities. 
  \end{enumerate}
\end{theorem}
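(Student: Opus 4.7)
I would treat the four parts in order, noting that (2) packages (1) and the strictly-convex addendum in (4) is immediate from $\ex E_{\le 1}=\partial E_{\le 1}$.

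\textbf{Parts (1) and (2).} I would invoke the Eilenberg--Otto characterization of covering dimension: for normal $X$, $\dim X<n$ holds iff every closed $A\subseteq X$ and continuous $g\colon A\to \partial E_{\le 1}$ extends to a continuous $X\to \partial E_{\le 1}$. Assuming $\dim X\ge n$, pick such a non-extending $g$ and Tietze-extend it to $f\in C(X,E_{\le 1})$. A putative decomposition $f=\sum_{i=1}^N \lambda_i f_i$ with $f_i\colon X\to \partial E_{\le 1}$ forces, at each $a\in A$, equality $\|\sum\lambda_i f_i(a)\|=1=\sum\lambda_i\|f_i(a)\|$ in the triangle inequality, confining each $f_i(a)$ to the smallest face of $\partial E_{\le 1}$ containing $g(a)$. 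For strictly convex $E$ this face is a singleton and $f_i|_A=g$, so each $f_i$ extends $g$, contradicting the choice. For general $E$, a continuous barycentric-type selection within each face (using upper semicontinuity of $v\mapsto$ smallest face containing $v$) still produces an extension of $g$; handling these non-trivial faces is the main technical hurdle. Part (2) follows by taking $X=E_{\le 1}$ (so $\dim X=n$): (1) gives $\cat{KM}^{\cvx}_{\partial;E}(X)=\infty$, and $\cat{KM}^{\cvx}_{\ex;E}\ge \cat{KM}^{\cvx}_{\partial;E}$ because $\ex E_{\le 1}\subseteq \partial E_{\le 1}$ makes extreme-valued convex combinations a subclass.

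\textbf{Part (3).} The first inequality comes from $\ex\subseteq \partial$. For $n=1$, $\cat{KM}^{\spn}_{\partial;E}=\infty\ge 3$ automatically, since sphere-valued continuous functions are locally constant and their span captures no non-step $f$ on a connected piece. For $n\ge 2$, I would take $X=E_{\le 1}$ and $f=\id$: any hypothetical decomposition $\id=c_1 h_1+c_2 h_2$ with $h_i\colon E_{\le 1}\to \partial E_{\le 1}$ would amount to a continuous section of the fibration $\{(u_1,u_2)\in (\partial E_{\le 1})^2 : c_1 u_1+c_2 u_2=v\}$ over $v\in E_{\le 1}$; the fiber at $v=0$ degenerates (jumping up in dimension when $|c_1|=|c_2|$, and being empty otherwise), obstructing continuous sections across the origin.

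\textbf{Part (4).} For $n\ge 2$ I would first reduce to showing that $\id\colon E_{\le 1}\to E_{\le 1}$ is itself a three-term span of sphere-valued continuous functions; composing with an arbitrary $f\colon X\to E_{\le 1}$ then yields the general statement, since each component of the identity decomposition precomposes with $f$ to a sphere-valued continuous function on $X$. For the identity, take $h_1\equiv v_0\in \partial E_{\le 1}$ constant and a real scalar $c_1$ large enough that $g:=\id-c_1 v_0\colon E_{\le 1}\to E\setminus\{0\}$ is everywhere nonzero, then choose scalars with $|c_2|+|c_3|>\sup\|g\|$ so the fiber of $(u_2,u_3)\mapsto c_2 u_2+c_3 u_3$ over each value of $g$ is nonempty. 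Pulling this fiber structure back along $g$ yields a fibration over the contractible $E_{\le 1}$, which admits a continuous section: for $n=2$ because double covers of contractible spaces split, and for $n\ge 3$ either by a direct geometric construction (generic fibers are connected, $(n-2)$-sphere-like for strictly convex $E$) or by a Michael-type selection argument for general $E$. The main obstacle — and the reason $n\ge 2$ matters — is precisely this continuous selection: the shift by $c_1 v_0$ removes the singular fiber that blocked Part (3), while $n\ge 2$ keeps the generic fiber well-behaved enough to be sectioned.
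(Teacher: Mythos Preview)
Your overall architecture matches the paper's, but three of the load-bearing steps are left as heuristics where the paper supplies concrete mechanisms.

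\textbf{Part (1), general $E$.} You correctly identify the hurdle: at $a\in A$ the $f_i(a)$ lie only in the face of $\partial E_{\le 1}$ through $g(a)$, not at $g(a)$ itself. Your proposed fix---a ``barycentric-type selection'' using upper semicontinuity of the face map---is not developed, and it is not clear such a selection exists (the face correspondence is badly behaved near skeleton boundaries). The paper bypasses this entirely: since $f(a)$ and $f_0(a)$ both lie in a common support hyperplane, the segment $[f(a),f_0(a)]$ lies in $\partial E_{\le 1}$, so $g=f|_A$ and $f_0|_A$ are \emph{uniformly homotopic} within $\partial E_{\le 1}$. Borsuk's theorem then says $g$ extends iff $f_0|_A$ does---and $f_0$ is already globally $\partial E_{\le 1}$-valued. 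No selection is needed.

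\textbf{Part (3).} When $|c_1|\ne |c_2|$ your ``empty fiber at $0$'' observation is a complete argument. When $|c_1|=|c_2|$, ``the fiber jumps in dimension, obstructing sections'' is only an intuition; dimension jumps do not by themselves preclude continuous sections. The paper reduces (as you do) to the midpoint map and then proves a genuine non-selection theorem: for any section $s$ of $\mathrm{mid}$ near $0$, an elementary trapezoid estimate forces the angle between $s(p)$ and the radial direction $\overline{0,p}$ to stay bounded \emph{away} from $0$ uniformly in $p$, while continuity at $0$ would force it to $0$.

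\textbf{Part (4).} Your shift-then-section strategy is the same opening move as the paper's, but the closing step---``the pullback fibration over contractible $E_{\le 1}$ admits a section, by double-cover splitting for $n=2$ or Michael selection for general $E$''---is where it breaks down. For non-strictly-convex $E$ the map $(u_2,u_3)\mapsto c_2u_2+c_3u_3$ is not a locally trivial fiber bundle (fibers change homeomorphism type across faces of $\partial E_{\le 1}$), so triviality over a contractible base does not apply; and the set-valued fiber map is not obviously lower semicontinuous with convex values, so Michael's theorem does not apply either. The paper's substitute is geometric and local: pick an \emph{exposed} point $q\in\partial E_{\le 1}$, observe that in a small enough neighborhood $U$ of $q$ (off the diameter $[-q,q]$) every $2$-plane section $K_{v,q}$ is strictly convex near $q$, so the $v$-bisected chord in that section is \emph{unique} and its endpoints vary continuously. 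A homothety then pushes all of $E_{\le 1}$ into a scaled copy of $U$, producing the three-term decomposition explicitly. This replaces your abstract section argument with a hands-on construction that works without strict convexity.
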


\Cref{th:2d.3pts}\Cref{item:th:2d.3pts:dim.constr} generalizes \cite[Theorem 2]{MR238281} and (assuming normality) the relevant implications of \cite[Corollary 10]{MR1469100}, \cite[Corollary 9]{MR1338691} or \cite[Theorem II]{MR229004} by dropping the strict convexity assumption on $E$ (or alternatively, dropping the complex structure on $E$, needed in \cite[Theorem 12, (3) $\Rightarrow$ (7)]{MR1458516}). Similarly, \Cref{th:2d.3pts}\Cref{item:th:2d.3pts:lin.n2} dispenses with the complex structure on $E$ employed in \cite[Corollary 9]{MR1458516}. 

We remind the reader (e.g. \cite[Corollary 8]{MR1204371}, \cite[proof of Theorem 3]{MR238281}, the very similar \cite[proof of Theorem II]{MR229004}, etc.) that for
\begin{itemize}[wide]
\item normal $X$ with $\dim X<n$;

\item and $n$-dimensional $E$;

\item arbitrary continuous maps $X\to E_{\le 1}$ are averages of two \emph{non-vanishing} such maps. 
\end{itemize}
The proof of \Cref{th:2d.3pts}\Cref{item:th:2d.3pts:dim.constr} can be leveraged into a criterion \emph{equivalent} to $\dim<n$ (for normal spaces), again with no strict-convexity assumptions.

\begin{theorem}\label{th:dimn.iff}
  For a normal space $X$ the following conditions are equivalent.
  \begin{enumerate}[(a),wide]
  \item\label{item:th:dimn.iff:dim} $\dim X<n$.

  \item\label{item:th:dimn.iff:all.shl} For every $n$-dimensional Banach space $E$ and any (some) $0<r<1$ every continuous function $X\to E_{\le 1}$ is a convex combination of continuous functions $X\to E_{[r,1]}$.

  \item\label{item:th:dimn.iff:1.shl} As in \Cref{item:th:dimn.iff:all.shl}, for any one fixed $E$.

  \item\label{item:th:dimn.iff:all.e.2} For every $n$-dimensional Banach space $E$ every continuous function $X\to E_{\le 1}$ is the average of two nowhere-0 such functions.
    
  \item\label{item:th:dimn.iff:all.e.n} For every $n$-dimensional Banach space $E$ every continuous function $X\to E_{\le 1}$ is a convex hull of nowhere-0 such functions.

  \item\label{item:th:dimn.iff:1.e.2} As in \Cref{item:th:dimn.iff:all.e.2}, for any one fixed $E$. 

  \item\label{item:th:dimn.iff:1.e.n} As in \Cref{item:th:dimn.iff:all.e.n}, for any one fixed $E$.
  \end{enumerate}
\end{theorem}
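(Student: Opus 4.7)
I propose to prove the theorem along the circle
\[
(a) \Rightarrow (b) \Rightarrow (c) \Rightarrow (g) \Rightarrow (a),
\]
supplemented by (a) $\Rightarrow$ (d) $\Rightarrow$ (e), (f) $\Rightarrow$ (g). The formal implications reduce to specializations and to the containment $E_{[r,1]} \subset E\setminus\{0\}$: (b) $\Rightarrow$ (c), (d) $\Rightarrow$ (f), (e) $\Rightarrow$ (g) by passing from every $E$ to one fixed $E$; (b) $\Rightarrow$ (e), (c) $\Rightarrow$ (g) because $E_{[r,1]}$-valued maps are nowhere zero; (d) $\Rightarrow$ (e), (f) $\Rightarrow$ (g) because averages are convex combinations; and (a) $\Rightarrow$ (d) is the reminder recalled just before the statement. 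The substantive work thus lies in (a) $\Rightarrow$ (b) and (g) $\Rightarrow$ (a).

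For (a) $\Rightarrow$ (b), fix $f \in C(X, E_{\le 1})$ and $r \in (0,1)$. The plan is to produce a three-term convex combination $f = \lambda_1 g_1 + \lambda_2 g_2 + \lambda_3 g_3$ with $g_i \in C(X, E_{[r,1]})$. The idea is to split the role of the summands: pre-assign $g_2, g_3$ as continuous $\partial E_{\le 1}$-valued direction functions, constructed using $\dim X < n$ in the same spirit as the reminder so as to avoid dangerous alignments with $f$, and then solve for $g_1 = (f - \lambda_2 g_2 - \lambda_3 g_3)/\lambda_1$. The residual constraint $\|g_1\| \in [r, 1]$ then becomes a continuous-selection problem in which the ``forbidden core'' $\lambda_1 E_{<r}$ has topological codimension $n$ in $E$; the hypothesis $\dim X < n$ permits a Michael- or general-position-type selection of $g_2, g_3$ that steers $g_1$ clear of that core. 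I expect this step to be the main obstacle: the shell $E_{[r,1]}$ is genuinely non-convex, and the three-summand setup is designed precisely so that the freedom in choosing $g_2, g_3$ reduces the non-convex obstruction to a codimension-$n$ selection, bringing the problem into reach of the selection-theoretic machinery that already underlies the reminder.

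For (g) $\Rightarrow$ (a), I argue contrapositively. Suppose $\dim X \ge n$; by the sphere-extension characterization of covering dimension, there exist a closed $Y \subset X$ and $f_0 \in C(Y, \partial E_{\le 1})$ admitting no continuous extension to $X \to E \setminus \{0\}$. A Tietze extension $\tilde f \in C(X, E_{\le 1})$ of $f_0$ necessarily vanishes somewhere. If, seeking contradiction, $\tilde f = \sum_i \lambda_i g_i$ with $g_i \in C(X, E_{\le 1} \setminus \{0\})$, restriction to $Y$ combined with $\|f_0\| \equiv 1 = \sum_i \lambda_i \|g_i|_Y\|$ forces $\|g_i|_Y\| \equiv 1$, and the equality case of the triangle inequality places each $g_i|_Y(y)$ in the minimal face of $E_{\le 1}$ containing $f_0(y)$. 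The straight-line homotopy $(1-t)f_0(y) + t\,g_i|_Y(y)$ stays in that face (by convexity of faces), hence inside $\partial E_{\le 1}$, establishing $g_i|_Y \simeq f_0$ as continuous maps $Y \to \partial E_{\le 1}$. Since $g_i/\|g_i\| : X \to \partial E_{\le 1}$ is a continuous extension of $g_i|_Y$, Borsuk's homotopy extension theorem---applicable because $Y$ is closed in the normal space $X$ and $\partial E_{\le 1} \cong S^{n-1}$ is a compact ANR---produces a continuous extension of $f_0$ to $X \to \partial E_{\le 1}$, contradicting the choice of $f_0$.
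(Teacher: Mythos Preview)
Your treatment of the formal implications and of \Cref{item:th:dimn.iff:1.e.n} $\Rightarrow$ \Cref{item:th:dimn.iff:dim} is essentially the paper's: both run the contrapositive through the sphere-extension characterization of covering dimension, Tietze-extend, observe that on the closed set $Y$ the convex-combination equality pins each $g_i|_Y$ to the boundary face (you phrase this via faces, the paper via support hyperplanes---same content), straight-line-homotope $g_i|_Y$ to $f_0$ inside $\partial E_{\le 1}$, and invoke Borsuk's homotopy extension theorem to reach a contradiction. That part is fine.

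The gap is in \Cref{item:th:dimn.iff:dim} $\Rightarrow$ \Cref{item:th:dimn.iff:all.shl}. You yourself flag this as ``the main obstacle'' and then leave it as a heuristic: pre-assign two sphere-valued $g_2,g_3$, solve for $g_1$, and hope the constraint $\|g_1\|\in[r,1]$ becomes a codimension-$n$ selection problem. But the constraint is two-sided, and the \emph{upper} bound $\|g_1\|\le 1$ is not a codimension-$n$ condition at all: for generic choices of $(g_2,g_3)\in(\partial E_{\le 1})^2$ and $f(x)$ near the boundary, the vector $f(x)-\lambda_2 g_2(x)-\lambda_3 g_3(x)$ will have norm exceeding $\lambda_1$ on an open set of $(g_2,g_3)$. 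So the ``forbidden'' region is not just the small core $\lambda_1 E_{<r}$; you also have to thread a genuinely thin target, and nothing in your outline explains how $\dim X<n$ helps with that half. The invocation of ``Michael- or general-position-type selection'' is not yet an argument.

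The paper's route is quite different and sidesteps this entirely. It introduces an auxiliary \emph{strictly convex} $0$-symmetric body $K\subset E_{<1}$ chosen so that $E_{<1}\setminus K\subset E_{(r,1)}$, sets $X_K:=f^{-1}K$ (closed in $X$, hence also of dimension $<n$), and applies the known strictly-convex result \cite[Corollary 7]{MR1338691} to write $f|_{X_K}$ as a convex combination of continuous $X_K\to\partial K$ maps. Strict convexity of $K$ forces each summand to agree with $f$ on $f^{-1}(\partial K)$, so they glue with $f$ itself on $X\setminus X_K$, yielding globally defined $E_{[r,1]}$-valued summands. The trick is thus to manufacture strict convexity where none was assumed, reducing to a cited theorem rather than building a bespoke selection argument.
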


\subsection*{Acknowledgments}

I am grateful for comments, pointers and suggestions from B. Badzioch, I. Thompson and J. Xia.


\section{Spans of extreme and/or boundary points in function spaces}\label{se:extr.spn}

We use standard notation (e.g. \cite[Conventions and notation]{schn_cvx_2e_2014}) for segments in convex sets:
\begin{equation*}
  [p,q]
  :=
  \left\{\lambda p+(1-\lambda)q\ :\ \lambda\in [0,1]\right\},
\end{equation*}
extending in the expected manner to $[p,q)$, $(p,q)$, etc.

\cite[Exercise 1.7]{dgls_ban_2e_1998} states that the $C^*$-algebra $C(X)$ spanned by its unit ball's extreme points. One approach that will confirm this, exhibiting an arbitrary element of $C(X)$ as a linear combination of at most 4 extreme points, would run as follows.

\begin{sketch}\label{sk:4extr}
  \begin{enumerate}[(1),wide]
  \item Observe first that every $f\in C(X)$ is the sum of two nowhere-zero functions
    \begin{equation*}
      f
      =
      k+f+(-k)
      ,\quad
      \|f\|<k\in \bR_{>0}.
    \end{equation*}
    This reduces the problem to showing that every nowhere-zero $f$ is a linear combination of at most 2 extreme points.

  \item Scaling $f$ if necessary, we can always assume it to take values in
    \begin{equation*}
      \bD_{\ge r}
      :=
      \left\{z\in\text{unit disk }\bD\ :\ |z|\ge r\right\}.
    \end{equation*}

  \item As such a function decomposes as $\id_{\bD_{\ge r}}\circ f$, it suffices to substitute for $(X,f)$ the concrete space-function pair $\left(\bD_{\ge r},\id\right)$.

  \item For every point $z\in \bD_{\ge r}$ write
    \begin{itemize}[wide]
    \item $f_i(z)$, $i=1,2$ for the two endpoints of the arc on $\bS^1=\partial \bD$ having $z$ as its midpoint;

    \item with the triple $(0,f_1(z),f_2(z))$ oriented counter-clockwise.          
    \end{itemize}
    \item $\id_{\bD_{\ge r}}$ being the average of $f_i$, $i=1,2$, we are done. 
  \end{enumerate}
\end{sketch}


\begin{remarks}\label{res:not.if.few.ex}
  \begin{enumerate}[(1),wide]
  \item\label{item:res:not.if.few.ex:few.ex} Observe that there is no reason why \cite[Exercise 1.7]{dgls_ban_2e_1998} would hold, generally, for $C(X,E)$ with arbitrary finite-dimensional Banach spaces $E$: the unit ball of $E$ might well have only finitely many extreme points (e.g. the finite-dimensional $\ell^1$ or $\ell^{\infty}$ spaces), in which case
    \begin{equation*}
      \ex C(E_{\le 1},\ E)_{\le 1}
      =
      C(E_{\le 1},\ \ex E_{\le 1})
    \end{equation*}
    will consist of constant functions on the unit ball $E_{\le 1}$.

  \item\label{item:res:not.if.few.ex:disk.univ} One general principle is apparent in examining \Cref{sk:4extr}: the unit disk $\bD$ is in a sense universal for providing bounds on the number of extreme points required to reconstruct any other function as a linear combination thereof. \Cref{le:disk.univ} notes the immediate generalization; we frequently appeal to it tacitly, constraining consideration of $\cat{KM}$ invariants to identity functions $\id_{E_{\le 1}}$.

    Observe also that the argument proving it also shows that alternative definitions of $\cat{KM}^{\bullet}_{\square;E}$ involving broader classes of spaces (e.g. normal) would have yielded the same invariant. 
  \end{enumerate}  
\end{remarks}

\begin{lemma}\label{le:disk.univ}
  For a finite-dimensional real or complex Banach space $E$ we have
  \begin{equation*}
    \cat{KM}^{\bullet}_{\square;E}
    =
    \cat{KM}^{\bullet}_{\square;E}\left(E_{\le 1},\ \id\right).
  \end{equation*}
  \qedhere
\end{lemma}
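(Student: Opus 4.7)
The whole point, already flagged in \Cref{res:not.if.few.ex}\Cref{item:res:not.if.few.ex:disk.univ}, is that every continuous $f:X\to E_{\le 1}$ factors tautologically as $\id_{E_{\le 1}}\circ f$, and (pre)composition with a fixed map commutes with both linear and convex combinations. So any decomposition of $\id_{E_{\le 1}}$ pulls back to one of $f$.

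The first step is the trivial direction. Since $E$ is finite-dimensional, $E_{\le 1}$ is itself a compact Hausdorff space, so it is one of the spaces $X$ over which the supremum defining $\cat{KM}^{\bullet}_{\square;E}$ ranges. Unpacking the suprema gives
\begin{equation*}
  \cat{KM}^{\bullet}_{\square;E}
  \ge
  \cat{KM}^{\bullet}_{\square;E}(E_{\le 1})
  \ge
  \cat{KM}^{\bullet}_{\square;E}(E_{\le 1},\id).
\end{equation*}

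For the opposite inequality, set $N:=\cat{KM}^{\bullet}_{\square;E}(E_{\le 1},\id)$ (nothing changes if $N=\infty$). Fix any compact Hausdorff $X$ and any $f\in C(X,E_{\le 1})$, and choose a witness
\begin{equation*}
  \id_{E_{\le 1}}
  =
  \bullet\left(h_i\right)_{i=1}^N
  ,\quad
  h_i\in C\left(E_{\le 1},\ \square E_{\le 1}\right).
\end{equation*}
Composing with $f$ yields continuous functions $h_i\circ f:X\to \square E_{\le 1}$, i.e.\ elements of $C(X,\square E_{\le 1})$. The key observation is that if $\bullet=\spn$ and $\id_{E_{\le 1}}=\sum_i c_i h_i$, then $f=\id_{E_{\le 1}}\circ f=\sum_i c_i(h_i\circ f)$; and likewise if $\bullet=\cvx$ with coefficients $\lambda_i\ge 0$, $\sum_i\lambda_i=1$. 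Hence $\cat{KM}^{\bullet}_{\square;E}(X,f)\le N$, and taking suprema over $f$ and $X$ completes the proof.

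\textbf{Main obstacle.} There really isn't one; the argument is a one-line composition trick. The only point worth double-checking is that precomposition with $f$ preserves the decoration $\square\in\{\ex,\partial\}$, but this is obvious: if $h_i(E_{\le 1})\subseteq \square E_{\le 1}$ then $(h_i\circ f)(X)\subseteq \square E_{\le 1}$, which is precisely the defining condition for membership in $C(X,\square E_{\le 1})$.
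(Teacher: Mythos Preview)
Your argument is correct and is exactly the one the paper has in mind: the lemma carries a \qedhere in its statement and is left as an ``immediate generalization'' of the factorization trick $f=\id_{E_{\le 1}}\circ f$ spelled out in \Cref{res:not.if.few.ex}\Cref{item:res:not.if.few.ex:disk.univ}. You have simply written out what the paper deems obvious, including the check that precomposition preserves the target constraint $\square E_{\le 1}$.
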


In light of \Cref{res:not.if.few.ex}\Cref{item:res:not.if.few.ex:few.ex}, we record in \Cref{pr:extr.pt.spn} what \Cref{sk:4extr} effectively proves. Recall that
\begin{itemize}[wide]
\item a \emph{convex body} \cite[\S 16.4]{k_tvs-1} (in a topological vector space) is a convex neighborhood of one of its points (always assumed compact in the finite-dimensional case of interest here, following the terminology of \cite[post Theorem 1.1.15]{schn_cvx_2e_2014});

\item a convex body is \emph{strictly convex} \cite[\S 26.1]{k_tvs-1} if its extreme points are precisely its boundary points (equivalently: the boundary contains no non-degenerate segments);

\item and a normed space is strictly convex when its unit ball is.
\end{itemize}
We take for granted the correspondence
\begin{equation*}
  K
  \xleftrightarrow{\quad}
  \|\cdot\|_K
  \quad
  \left(\text{\emph{Minkowski functional} \cite[post Lemma 1.7.12]{schn_cvx_2e_2014}}\right)
\end{equation*}
between 0-symmetric convex bodies $K\subset \bR^n$ and Banach-space structures on $\bR^n$: $\|\cdot\|_K$ is a norm recovering $K$ as its unit ball. 

\begin{proposition}\label{pr:extr.pt.spn}
  For compact Hausdorff $X$ and 2-dimensional strictly-convex real Banach spaces $E$ every element of $C(X,E)$ is a linear combination of at most four extreme points of its unit ball. 
\end{proposition}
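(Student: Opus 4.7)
The plan is to adapt \Cref{sk:4extr} to the 2-dimensional strictly convex Banach-space setting, leveraging that $\partial E_{\le 1}$ is then a topological circle inheriting an orientation from $E$. By \Cref{le:disk.univ} it suffices to exhibit an arbitrary continuous $f: X \to E_{\le 1}$ on compact Hausdorff $X$ as a linear combination of at most four $\partial E_{\le 1}$-valued continuous functions; strict convexity identifies $\partial E_{\le 1}$ with $\ex E_{\le 1}$, so such a combination is automatically in terms of extreme points.

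First I would split $f$ into nowhere-vanishing summands. Fix any $e\in\partial E_{\le 1}$ and a scalar $R>\tfrac 12$, and write
\begin{equation*}
  f \;=\; g + h,
  \qquad
  g := \tfrac 12 f + R\,e,
  \quad
  h := \tfrac 12 f - R\,e.
\end{equation*}
Then $\|g(x)\|,\|h(x)\|\ge R-\tfrac 12>0$ pointwise, so $g,h$ are continuous maps $X\to E\setminus\{0\}$. Dividing each by its supremum norm, the problem reduces to the auxiliary claim that, for every $r\in(0,1)$, an arbitrary continuous $\varphi:X\to E_{[r,1]}$ is the average of two continuous functions $X\to\partial E_{\le 1}$; applying this to the rescaled $g$ and $h$ and reinserting the scalars yields the desired 4-term linear combination.

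To prove the auxiliary claim I would construct a continuous chord-endpoint map
\begin{equation*}
  E_{[r,1]}\ni v\;\longmapsto\;\bigl(p_1(v),\,p_2(v)\bigr)\in\partial E_{\le 1}\times\partial E_{\le 1},
  \qquad
  v=\tfrac 12\bigl(p_1(v)+p_2(v)\bigr),
\end{equation*}
and then take $\varphi=\tfrac 12(p_1\circ\varphi+p_2\circ\varphi)$. For each $v$, strict convexity (the uniqueness of the chord of $\partial E_{\le 1}$ bisected by $v$, highlighted in the Introduction) supplies an unordered pair on the sphere, degenerating to $p_1=p_2=v$ on $\partial E_{\le 1}$. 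Fixing an orientation on $E\cong\bR^2$, I would select the ordering so that the triple $(0,p_1(v),p_2(v))$ is positively oriented whenever the endpoints are distinct; this is a consistent global rule on the annulus $E_{[r,1]}$.

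The main obstacle is verifying the continuity of $v\mapsto(p_1(v),p_2(v))$. Given $v_n\to v$ in $E_{[r,1]}$, compactness of $\partial E_{\le 1}$ extracts subsequential limits $p_i(v_{n_k})\to q_i$ satisfying $v=\tfrac 12(q_1+q_2)$; strict convexity forces $\{q_1,q_2\}=\{p_1(v),p_2(v)\}$, and the orientation convention pins down the labelling, so the full sequence converges. This is the one genuinely delicate point — the algebraic core is just the strict-convexity uniqueness, but the continuous ordering requires this compactness-plus-uniqueness argument together with a global orientation choice. All remaining ingredients (rescaling, bookkeeping of norms, and the extreme $=$ boundary identification in strictly convex $E$) are then routine.
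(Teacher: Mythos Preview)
Your proposal is correct and follows essentially the same approach as the paper: both adapt \Cref{sk:4extr} by splitting $f$ into two nowhere-vanishing summands, rescaling into an annulus $E_{[r,1]}$, and then invoking strict convexity for both the identification $\ex E_{\le 1}=\partial E_{\le 1}$ and the uniqueness of bisected chords (with an orientation convention to label the endpoints). The only cosmetic differences are your choice of splitting $f=(\tfrac12 f+Re)+(\tfrac12 f-Re)$ versus the paper's $f=(f+k)+(-k)$, and your more explicit compactness-plus-uniqueness argument for continuity where the paper simply asserts that uniqueness of $c(p)$ renders $c(\bullet)$ continuous.
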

\begin{proof}
  Observe that \Cref{sk:4extr} goes through verbatim, with strict convexity playing a dual role.
  \begin{itemize}[wide]

  \item First, by its very definition we have $\ex E_{\le 1}=\partial E_{\le 1}$. 
    
  \item Secondly, strict convexity also ensures that points
    \begin{equation*}
      p\in
      E_{[r,1]}
      :=
      \left\{v\in E\ :\ r\le \|v\|\le 1\right\}
      ,\quad
      r>0
    \end{equation*}
    are midpoints of \emph{unique} chords $c(p)$ of $\partial E_{\le 1}=\ex E_{\le 1}$. \cite[Lemma 3]{MR238281} confirms this, but also: if $P$ is a parallelogram centered at $p$ and hence \emph{not} at the origin then the convex hull $\mathrm{cvx}\left(P\cup (-P)\right)$ is a 2-planar affine projection of a 3-cube, so has  at most 6 edges (as follows easily from either \cite[Theorem 7.16 or Lemma 7.10]{zieg_polyt_1995}). In particular, the vertices of $\pm P$ cannot all be extreme points. This uniqueness then renders $c(\bullet)$ continuous, finishing the proof per \Cref{sk:4extr}.
  \end{itemize}
\end{proof}

It is natural, at this point, to ask whether the bound of four obtained in \Cref{pr:extr.pt.spn} is optimal; \Cref{th:2d.3pts} shows that it is not. Observe also the contrast between \emph{linear} and \emph{convex} combinations: the bound is strictly higher (indeed, infinite) in the latter case. 


\pf{th:2d.3pts}
\begin{th:2d.3pts}
  \begin{enumerate}[label={},wide]

  \item \textbf{\Cref{item:th:2d.3pts:dim.constr}} \cite[Theorem II, proof's last paragraph]{MR229004} handles the analogous statement for Hilbert-space valued maps; an examination thereof makes it clear that the crucial property there is strict convexity, so we alter that argument appropriately to obviate the need for that constraint.

    One of the main ingredients is an appeal to the characterization (\cite[Theorem 3.5 and its proof]{MR20771}, \cite[Theorem 3.1]{zbMATH03098536}) of $(<n)$-dimensional spaces via extension properties for maps into spheres: for normal $X$, $\dim X<n$ precisely continuous maps $A\xrightarrow{f} \bS^{n-1}$ defined on arbitrary closed $A\subseteq X$ extend continuously across all of $X$.

    Given such an $f$, regarded as taking values in $\partial E_{\le 1}$:
    \begin{itemize}[wide]
    \item extend to a continuous map $X\to E_{\le 1}$ by \emph{Tietze} \cite[Theorem 3.2]{zbMATH03098536};

    \item write
      \begin{equation*}
        f
        =
        \sum_{i\ge 0}^{\substack{\text{convex}\\\text{combination}\\\lambda_i>0}} \lambda_i f_i
        ,\quad
        X
        \xrightarrow{\quad f_i\quad}\partial
        E_{\le 1};
      \end{equation*}

    \item specializing back to $A$, where $f$ takes values in $E_{\le 1}$, observe that for every $a\in A$ all $f_i(a)$, $f(a)$ must belong to a \emph{support hyperplane} \cite[\S 1.3]{schn_cvx_2e_2014} of $E_{\le 1}$ at $f(a)$ so in particular
      \begin{equation*}
        [f(a),f_0(a)]
        \subseteq
        H\cap E_{\le 1}=H\cap \partial E_{\le 1};
      \end{equation*}

    \item as $f|_A$ can be homotoped onto $f_0|_A$ with uniform speed along the segments $[f(a),f_0(a)]$, the two are \emph{uniformly homotopic} in the sense of \cite[\S 2, p.204]{MR20771};

    \item per \emph{Borsuk's} \cite[Theorem 2.1]{MR20771}, $f|_A$ extends to $X\to \partial E_{\le 1}$ if and only if $f_0|_A$ does;

    \item the later condition of course obtains, since $f_0$ took values in $\partial E_{\le 1}$ to begin with.       
    \end{itemize}
    
  \item \textbf{\Cref{item:th:2d.3pts:cvx}} This of course follows from \Cref{item:th:2d.3pts:dim.constr} and \Cref{le:disk.univ}, given that
    \begin{equation*}
      \ex E_{\le 1}\subseteq \partial E_{\le 1}
      \xRightarrow{\quad}
      \cat{KM}^{\cvx}_{\ex;E}
      \ge
      \cat{KM}^{\cvx}_{\partial;E}
    \end{equation*}
    and $\dim E_{\le 1}=n$ (as opposed to being strictly smaller); we provide an alternative, direct proof. Since 
    \begin{equation}\label{eq:incl.ineq}
      \ex E_{\le 1}\subseteq \partial E_{\le 1}
      \xRightarrow{\quad}
      \cat{KM}^{\cvx}_{\ex;E}
      \ge
      \cat{KM}^{\cvx}_{\partial;E}
      \xlongequal[\quad]{\quad\text{\Cref{le:disk.univ}}\quad}
      \cat{KM}^{\cvx}_{\partial;E}\left(E_{\le 1},\ \id\right),
    \end{equation}
    it will suffice to prove the latter infinite. 
    
    If
    \begin{equation*}
      \forall\left(v\in E_{\le 1}\right)
      \ :\
      v=\sum_{i}\alpha_i f_i(v)
      ,\quad
      \text{fixed}
      \left[
        \begin{aligned}
          &\alpha_i\ge 0,\ \sum_i \alpha_i=1\\
          &E_{\le 1}
            \xrightarrow[\quad\text{continuous}\quad]{\quad f_i\quad}
            \partial E_{\le 1}
        \end{aligned}
      \right.
    \end{equation*}
    then each individual $f_i$ will fix the extreme points of $E_{\le 1}$, thus restricting by \Cref{pr:deg.fix.ex} to a degree-1 self-map of $\bS^{n-1}\cong \partial E_{\le 1}$. The assumed extensibility of $f|_{\partial E_{\le 1}}$ across all of $E_{\le 1}$, on the other hand, renders $f|_{\partial E_{\le 1}}$ homotopic to a constant self-map of the sphere. Thus the contradiction:
    \begin{equation*}
      1=
      \deg\left(f|_{\partial E_{\le 1}}\right)
      \xlongequal[\quad]{\ \text{\cite[Proposition II.8.2]{hu_homol_1966}}\ }
      \deg\left(\text{constant}\right)
      =0.
    \end{equation*}

    
  \item \textbf{\Cref{item:th:2d.3pts:linbig.n2}} The first inequality in \Cref{eq:all.n.ge3} is automatic (as observed in \Cref{eq:incl.ineq} for convex rather than linear combinations), so we confirm the second (again in the context of the identity $\id_{E_{\le 1}}$, via \Cref{le:disk.univ}). 

    Suppose
    \begin{equation*}
      \forall\left(v\in E_{\le 1}\right)
      \ :\ 
      v=\alpha_1 f_1(v) + \alpha_2 f_2(v)
      ,\quad
      E_{\le 1}
      \xrightarrow[\quad\text{continuous}\quad]{\quad f_{1,2}\quad}
      \partial E_{\le 1}
    \end{equation*}
    for fixed $\alpha_i\in \bR$. Scaling and changing signs, this can be recast as 
    \begin{equation*}
      \forall\left(v\in E_{\le 1}\right)
      \ :\ 
      v=\lambda \varphi_1(v) + (1-\lambda) \varphi_2(v)
      ,\quad
      E_{\le 1}
      \xrightarrow[\quad\text{continuous}\quad]{\quad \varphi_{1,2}\quad}
      \partial E_{\le R}      
    \end{equation*}
    for some $R>0$ and $\lambda\in (0,1)$. Specializing at $v:=0$ forces $\lambda=\frac 12$, and it remains to argue (scaling the entire discussion back inside $E_{\le}$) that the endpoints of a chord on $\partial E_{\le 1}$ bisected midway by $v$ cannot be chosen continuously for $v$ ranging over a neighborhood $E_{<\varepsilon}\ni 0$. We relegate this to \Cref{th:no.cont.sel.chrd}, which proves more: there is no continuous selection
    \begin{equation*}
      E_{<\varepsilon}\ni
      v
      \xmapsto{\quad}
      \left(\text{$v$-bisected chord of }\partial E_{\le 1}\right)
    \end{equation*}
    valued in the \emph{Vietoris-topologized} \cite[\S 1.2]{ct_vietoris} space of compact subsets of $E$. 
    
  \item \textbf{\Cref{item:th:2d.3pts:lin.n2}} The second claim does indeed follow from the first, given that strict convexity means \emph{precisely} $\ex E_{\le 1} = \partial E_{\le 1}$. The goal for the duration will thus be to show that
    \begin{equation*}
      \forall\left(E,\ n:=\dim E\ge 2\right)
      \ :\
      \cat{KM}^{\spn}_{\partial;E}
      =
      3.
    \end{equation*}
    Given \Cref{eq:all.n.ge3}, this in turn amounts to showing that the identity on $E_{\le 1}$ is a linear combination of at most three $\partial E_{\le 1}$-valued functions.

    The various length estimates are with respect to the standard Hilbert norm $\|\cdot\|$ on $\bR^n$, wherein we regard $E_{\le 1}$ as a 0-symmetric convex body in $\bR^n$. 

    \begin{enumerate}[(I),wide]
    \item\textbf{Conclusion, conditionally.} We will first sketch the construction assuming a number of choices have been made judiciously, and then unwind the contextual meaning of `judicious'. 

      \begin{itemize}[wide]
      \item Consider first a non-empty open subset $U\subset E_{<1}$, positioned so as to ensure the existence of continuous functions
        \begin{equation}\label{eq:f23}
          U
          \xrightarrow{\quad f_{2,3}\quad}
          \partial E_{\le 1}
          ,\quad
          \forall\left(v\in U\right)
          \ :\
          v=\frac{f_2(v)+f_3(v)}{2}.
        \end{equation}

      \item For some conveniently large $R>0$, a $\lambda\in (0,1)$ and some $p\in R\partial E_{\le 1}$ the $p$-centered \emph{homothety} \cite[\S 5.1]{cox_intro-geom_2e_1969}
        \begin{equation*}
          \bR^n\ni
          v
          \xmapsto{\quad T\quad}
          \frac {1}{\lambda}v
          -
          \frac{1-\lambda}{\lambda} p
          \in \bR^n
        \end{equation*}
        maps $E_{\le 1}$ inside $R U$.

      \item Set
        \begin{equation*}
          E_{\le 1}\ni
          v
          \xmapsto[\quad\text{constant}\quad]{\quad f_1\quad}
          \frac{1}{R} p
          \in
          \partial E_{\le 1}.
        \end{equation*}

      \item By construction:
        \begin{equation*}
          \forall\left(v\in E_{\le 1}\right)
          \ :\
          v=\lambda Tv + (1-\lambda) p
          =
          R\left(
            (1-\lambda)f_1(v)
            +
            \frac{\lambda}{2}f_2\left(\frac{Tv}{R}\right)
            +
            \frac{\lambda}{2}f_3\left(\frac{Tv}{R}\right)
          \right).
        \end{equation*}
      \end{itemize}
      To close the remaining gap, we need to argue that an open $U\ne \emptyset$ admitting the $f_{2,3}$ of \Cref{eq:f23} does in fact exist.

    \item\textbf{A convenient $U$.} Recall (paraphrasing \cite[post Lemma 1.4.6]{schn_cvx_2e_2014}) that an \emph{exposed} point $q\in \partial E_{\le 1}$ is one where some linear functional $\bR^n\xrightarrow{\varphi}\bR$ achieves its \emph{unique} minimum on $E_{\le 1}$:
      \begin{equation*}
        \forall\left(E_{\le 1}\ni q'\ne q\right)
        \ :\ 
        \varphi(q')>\varphi(q)=m:=\min_{E_{\le 1}}\varphi.
      \end{equation*}
      These abound ($E_{\le 1}$ being the closed convex hull of its exposed points \cite[Corollary 1.4.5 and Theorem 1.4.7]{schn_cvx_2e_2014}), so we can certainly fix one such along with the auxiliary $\varphi$ producing it.

      For $v\in \bR^n$ off the line spanned by $q$ write
      \begin{equation*}
        K_{v,q}
        :=
        K\cap \spn\left\{v,q\right\}
        ,\quad
        K:=E_{\le 1}
      \end{equation*}
      (a 2-planar section of $K$). Observe that for sufficiently small $\varepsilon>0$ (henceforth fixed) the portion $\partial K_{v,q}\cap \varphi^{-1}([m,m+\varepsilon])$ of the relative boundary of $K_{v,q}$ will not contain the vertices of a parallelogram for any $v$. Now ensure that
      \begin{itemize}[wide]
      \item $U$ avoids the segment $[q,-q]$;

      \item and is contained in a neighborhood of $q$ sufficiently small that the endpoints of the (unique, by the choice of $\varepsilon$) chord of $K_{v,q}$ bisected by $v\in U$ is contained in $\partial K_{v,q}\cap \varphi^{-1}([m,m+\varepsilon])$. 
      \end{itemize}
      This is sufficient for our purposes: $f_{2,3}$ will be the endpoints of the bisected chord just mentioned, with (say) $f_2(v)$ and $v\in U$ on opposite sides of the line $\overline{q,-q}$ in the plane $\spn\{v,q\}$.
    \end{enumerate}
    This concludes the proof of the theorem.  \qedhere
  \end{enumerate}
\end{th:2d.3pts}

Recall (\cite[\S 6.5]{td_alg-top}, \cite[\S 2.2]{hatch_at}, \cite[\S II.8]{hu_homol_1966}) that the \emph{degree} of a self-map of $\bS^{n-1}$ is the scaling factor it induces on $H_{n-1}\left(\bS^{n-1},\bZ\right)\cong \bZ$.

\begin{proposition}\label{pr:deg.fix.ex}
  Let $K\subset \bR^n$ be a convex body and
  \begin{equation*}
    \bS^{n-1}\cong \partial K
    \xrightarrow[\quad\text{continuous}\quad]{f}
    \partial K.
  \end{equation*}
  If $f$ fixes the extreme points of $K$ then $\deg f=1$. 
\end{proposition}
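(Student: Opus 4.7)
My plan is to exhibit a homotopy $f \simeq \id_{\partial K}$ through self-maps of $\partial K$, so that the homotopy invariance of degree yields $\deg f = \deg \id = 1$.

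After translating so that $0 \in \operatorname{int}(K)$, the Minkowski functional $\|\cdot\|_K$ becomes positive on $\bR^n\setminus\{0\}$ with $\partial K = \{y : \|y\|_K = 1\}$, and $r(y) := y/\|y\|_K$ is a continuous retraction of $\bR^n \setminus \{0\}$ onto $\partial K$. The natural candidate is the straight-line-then-project homotopy
\begin{equation*}
  H_t(x) \;:=\; r\bigl((1-t)x + tf(x)\bigr),
\end{equation*}
which is continuous and interpolates $H_0 = \id_{\partial K}$ with $H_1 = f$, provided the convex combination $h_t(x) := (1-t)x + tf(x)$ never vanishes on $\partial K \times [0,1]$.

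Verifying $h_t(x) \ne 0$ is the main content of the proof. If $h_{t_0}(x) = 0$ for some $(x, t_0)$ then $f(x) = -\lambda x$ with $\lambda > 0$, so the segment $[x, f(x)]$ passes through the origin, an interior point of $K$. To exclude this using the extreme-point hypothesis, I would invoke a face-preservation property: let $F$ be the smallest (closed) face of $K$ containing $x$. Then $\ex F \subseteq \ex K$ is fixed by $f$, and Krein-Milman exhibits $F$ as the closed convex hull of $\ex F$; from this, together with the continuity of $f$ and the constraint that $f$ lands in $\partial K$, one should deduce $f(F)\subseteq F$. Granting face-preservation, $f(x) \in F$, so $[x, f(x)] \subseteq F \subseteq \partial K$ by convexity of $F$; in particular the segment avoids $0 \in \operatorname{int}(K)$, contradicting $h_{t_0}(x) = 0$.

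The step I expect to be the main obstacle is precisely the claim $f(F) \subseteq F$: while geometrically plausible, a generic continuous self-map of $\partial K$ fixing only a sparse set of points need not a priori respect the face structure, so extracting face-preservation from bare extreme-point fixing is where the convex-geometric meat of the argument lies. A natural tack is to approximate arbitrary $x \in F$ by convex combinations of nearby fixed extreme points of $F$ and then use the $\partial K$-valuedness of $f$ together with continuity to pin $f(x)$ down inside $F$; the finer one chooses such approximations, the narrower the region of $\partial K$ on which $f(x)$ is forced to lie, squeezing the image back into $F$.
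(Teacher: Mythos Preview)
The face-preservation step $f(F)\subseteq F$ fails outright, and with it your whole homotopy. Take $K$ the square with vertices $A=(1,1)$, $B=(-1,1)$, $C=(-1,-1)$, $D=(1,-1)$; let $f$ be the identity on the edges $AB$, $BC$, $CD$, and on the edge $DA$ send the arc from $D$ to $A$ around $\partial K$ the \emph{long} way, through $C$ and $B$. This $f$ is continuous and fixes all four extreme points, yet (parametrizing the detour at constant speed) sends the midpoint $(1,0)$ of $DA$ to $(-1,0)$, so $[x,f(x)]$ passes through the origin and your $H_t$ is undefined there. More damagingly, this $f$ has degree $0$: three edges each contribute $+\tfrac14$ to the winding number and the detour contributes $-\tfrac34$. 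Hence no homotopy $f\simeq\id_{\partial K}$ exists at all, and the proposition as literally stated is already false for $n=2$. Your proposed rescue---``squeeze $f(x)$ into $F$ by approximating $x$ with nearby extreme points''---cannot work, because fixing a finite set of points places no constraint whatsoever on where a continuous map sends the relative interiors of positive-dimensional faces.

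The paper argues differently, first reducing to polytopes by Hausdorff approximation and then homotoping $f$ to $\id$ skeleton by skeleton; but that induction has the same defect at the edge step for the square (the two arcs in $S^1$ from $D$ to $A$ are not homotopic rel endpoints). What actually makes the paper's \emph{application} in \Cref{th:2d.3pts}\Cref{item:th:2d.3pts:cvx} go through is extra structure you should exploit directly: there each $f_i$ comes from a pointwise identity $v=\sum_j\alpha_j f_j(v)$ with $\alpha_j>0$ and $\sum_j\alpha_j=1$, and \emph{that} convex relation genuinely forces every $f_j(v)$ into the minimal face of $E_{\le 1}$ through $v$ (a point in the relative interior of a face $F$ cannot be a proper convex combination of points of $E_{\le 1}$ unless all of them lie in $F$). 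With this face preservation in hand your straight-line homotopy is valid and yields $\deg f_i=1$; the bare hypothesis ``fixes extreme points'' is simply too weak.
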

\begin{proof}
  The argument is two-pronged.

  \begin{enumerate}[(I),wide]
  \item\textbf{: Reduction to polyhedra.} Substituting for $K$ convex hulls
    \begin{equation*}
      P_F:=\mathrm{cvx}\; F
      ,\quad
      \text{finite }F\subseteq \ex K
    \end{equation*}
    will produce, for sufficiently large $F$, polyhedra arbitrarily close to $K$ in the \emph{Hausdorff metric} \cite[Definition 7.3.1]{bbi} (equivalently \cite[Note 1.8.2]{schn_cvx_2e_2014}, in the Vietoris topology on the space of closed subsets of $\bR^n$). This is immediate, for instance, upon an examination of \cite[Theorem 1.8.16]{schn_cvx_2e_2014} (and its proof).
    
    Projection along rays based at the origin will homotope such closely-approximating $P_F$ onto $K$, and transport self-maps of their boundaries onto one another. The degree being a homotopy invariant \cite[Proposition II.8.2]{hu_homol_1966}, this indeed effects the desired reduction of the claim from $K$ to $P_F$. 
    
  \item\textbf{: Polyhedral $K$.} The restrictions of $f$ to the \emph{$r$-skeletons} $\ex_r K$ \cite[post Remark 2.1.5]{schn_cvx_2e_2014} can be homotoped to the identity recursively on $0\le r\le n-1$, relative smaller skeletons:
    \begin{itemize}[wide]
    \item in first instance, independently over each edge module that edge's endpoints;
      
    \item independently over each 2-face relative to that 2-face's boundary afterwards;

    \item and in general, independently over each $d$-face relative to that face's $(d-1)$-dimensional boundary. 
    \end{itemize}
  \end{enumerate}
\end{proof}

In \Cref{th:no.cont.sel.chrd} we write
\begin{equation*}
  \forall\left(\text{set }X\right)
  \ :\ 
  X^{[m]}
  :=
  X^m/S_m
  ,\quad
  S_m:=\text{symmetric group acting in the obvious fashion}
\end{equation*}
(the $m^{th}$ \emph{symmetric power/product} of $X$ \cite[Definition 78.2]{gorn_top-fx-pt_2e_2006}) equipped with the \emph{quotient topology} \cite[\S 22]{mnk}. 

\begin{remark}\label{re:sym.pow}
  There is a further continuous surjection
  \begin{equation*}
    X^{[m]}
    \xrightarrowdbl{\quad}
    X_{[m]}
    :=
    \left\{Z\subseteq X\ :\ 1\le |Z|\le m\right\}
  \end{equation*}
  onto the space of non-empty $(\le m)$-element subsets, equipped with the Vietoris topology. $X_{[m]}$ is also, confusingly, sometimes \cite{MR1562283,MR3338733,MR3856595} referred to as a symmetric product and/or power. There is no distinction between the two at $m=2$, which is the case of interest below. 
\end{remark}

\begin{theorem}[cf. {\cite[Proposition 9]{MR1469100}}]\label{th:no.cont.sel.chrd}
  For a 0-centered convex body $K\subseteq \bR^n$ any section $U\xrightarrow{s} \left(\partial K\right)^{[2]}$ of the midpoint map
  \begin{equation}\label{eq:midpt}
    \left(\partial K\right)^{[2]}
    \ni
    (p_1,p_2)
    \xmapsto{\quad\mathrm{mid}\quad}
    \frac{p_1+p_2}2
    \in K
  \end{equation}
  defined on a neighborhood $U\ni 0\in K$ must be discontinuous at $0$. 
\end{theorem}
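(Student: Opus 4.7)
The plan is to argue by contradiction: suppose $s$ is continuous at $0$. Since $K=-K$, the $\mathrm{mid}$-preimage of $0$ consists precisely of the antipodal pairs $\{p,-p\}$ with $p\in\partial K$, so $s(0)=\{p_0,-p_0\}$ for some $p_0\in\partial K$; note $p_0\ne 0$ (because $0\in\mathrm{int}(K)$), so $\{p_0,-p_0\}$ is a genuine two-element set.

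First I would lift $s$ to ordered pairs. A basic Vietoris neighborhood of $\{p_0,-p_0\}\in(\partial K)^{[2]}$ splits as a product of disjoint neighborhoods of $p_0$ and of $-p_0$ in $\partial K$, so after shrinking $U$ to a smaller $V\ni 0$ the section $s$ lifts continuously to $(p_1,p_2):V\to\partial K\times\partial K$ with $p_1(0)=p_0$, $p_2(0)=-p_0$ and $p_1(v)+p_2(v)=2v$ throughout.

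Next I would approach $0$ along the ray $v=tp_0$, $t>0$ small. Set $x_t:=p_1(tp_0)$. The $0$-symmetry of $K$ gives $w_t:=-p_2(tp_0)=x_t-2tp_0\in\partial K$, and by continuity $x_t,w_t\to p_0$ as $t\to 0^+$. Thus the line $L_t:=x_t+\bR p_0$ meets $\partial K$ at the two distinct points $x_t,w_t$, whose Euclidean separation is $2t\|p_0\|$. I claim the chord $L_t\cap K$ equals exactly $[w_t,x_t]$: otherwise the whole segment $L_t\cap K$ would lie in a supporting hyperplane of $K$ at $x_t$ with unit outward normal $\nu_t$, forcing $0=\langle\nu_t,x_t-w_t\rangle=2t\langle\nu_t,p_0\rangle$; extracting a subsequential limit $\nu_t\to\nu\in S^{n-1}$ then yields a nonzero supporting functional $\nu$ of $K$ at $p_0$ with $\langle\nu,p_0\rangle=0$, contradicting the fact that $0\in\mathrm{int}(K)$ forces $\langle\nu,p_0\rangle\ge\varepsilon\|\nu\|^2>0$ (applied to $\varepsilon\nu\in K$ for small $\varepsilon>0$).

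Finally, letting $t\to 0^+$ gives $L_t\to L_\infty:=\bR p_0$; since $L_\infty$ crosses $\mathrm{int}(K)$ transversally at $0$, the standard Hausdorff-continuity of $L\mapsto L\cap K$ at transverse lines produces $L_t\cap K\to L_\infty\cap K=[-p_0,p_0]$, a non-degenerate chord of length $2\|p_0\|>0$. But the previous step gives $\mathrm{length}(L_t\cap K)=2t\|p_0\|\to 0$, the desired contradiction. The delicate point is the flat-face exclusion: it uses compactness to extract the normal limit $\nu$ and upper semicontinuity of the normal cone under $x_t\to p_0$, together with the quantitative positivity of $\langle\nu,p_0\rangle$ recalled above.
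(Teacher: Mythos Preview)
Your argument is correct and takes a genuinely different route from the paper's. Both proofs approach $0$ along the ray $tp_0$ (with $s(0)=\{p_0,-p_0\}$) and exploit the symmetry $K=-K$ by reflecting one endpoint of the bisected chord through the origin, but they extract different contradictions. The paper reflects the \emph{first} endpoint to $q:=-p_0$ and uses the triangle-midsegment relation $[0,p]\parallel[q,p_1]$, $\|q-p_1\|=2\|p\|$ to obtain (via an elementary trapezoid estimate) a uniform lower bound $\theta>0$ on the angle between the chord line $\overline{p_0,p_1}$ and the radial line $\overline{0,p}$, valid for \emph{all} small $p$ and \emph{all} $p$-bisected chords; continuity of $s$ along $tp_0$ then forces that angle to $0$. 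Because the original chord always passes through the interior point $p$, no flat-face case arises. You instead reflect the \emph{second} endpoint to $w_t$ and study the slab $L_t\cap K$ cut by the line through $x_t$ \emph{parallel} to $p_0$; since $L_t$ need not meet $\operatorname{int}(K)$, you must first exclude the flat-face possibility (done correctly via the normal-cone limit and closedness of the normal-cone graph), and then contradict $|L_t\cap K|\to 0$ against Hausdorff convergence $L_t\cap K\to[-p_0,p_0]$ (justified, e.g., by concavity and hence interior continuity of the fibrewise support functions $v\mapsto s_\pm(v)$ on $\pi(K)$). Your approach trades the paper's bare-hands angle chase for standard convex-geometric machinery, but is fully rigorous and makes the boundary-degeneracy issue explicit.

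One cosmetic point: the claim ``$L_t\cap K=[w_t,x_t]$'' should read ``for all sufficiently small $t>0$'', since the normal-limit argument only rules out failure along a sequence $t_k\to 0$; this is clearly what you intend, but the quantifier deserves to be stated.
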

\begin{proof}
  Consider a chord $[p_0,p_1]$ of $\partial K$ bisected by a point $p$ in the punctured neighborhood $U^{\times}:=U\setminus \{0\}$. The origin then bisects $[p_0,q:=-p_0]$. We have
  \begin{equation}\label{eq:dbl.chrd}
    [0,p]\ \|\ [q,p_1]
    \quad\text{and}\quad
    \|q-p_1\| = 2\|p\|.
  \end{equation}
  If $U$ is small enough to ensure that $\|p\|\ll \|p'\|$ for
  \begin{equation*}
    [p',-p']
    :=
    \text{$\partial K$-chord containing }[0,p]
    =
    \left(\text{line }\overline{0,p}\right)\cap K
  \end{equation*}
  (which of course we can always assume), the length equation in \Cref{eq:dbl.chrd} ensures a uniform (in $p$, $p_0$ and $p_1$) positive lower bound on the distance between the parallel lines $\overline{0,p}$ and $\overline{q,p_1}$. There is of course also a global \emph{upper} bound on the (Hausdorff, say) distance between the segments $[0,p]$ and $[q,p_1]$. Denoting by $m\angle(\ell,\ell')$ the $\left[0,\frac{\pi}2\right]$-valued angle between two lines $\ell,\ell'$, a simple elementary-geometry exercise in examining the trapezoid $(0,p,q,p_1)$ yields a global lower bound
  \begin{equation}\label{eq:tht.bd}
    \exists\left(\theta>0\right)
    \forall\left(U^{\times}\ni p \text{ bisecting } [p_0,p_1]\right)
    \ :\ 
    m\angle\left(\overline{0,p},\ \overline{p_0,p_1}\right)>\theta.
  \end{equation}
  This suffices: if the locally-defined section (i.e. right inverse to $\mathrm{mid}|_{\mathrm{mid}^{-1}(U)}$) $U\xrightarrow{s} \left(\partial K\right)^{[2]}$ were continuous at 0, then on the one hand for $p$ sufficiently close to 0 the angle between $s(p)$ and $[0,p]$ would be arbitrarily small, while also bounded below by $\theta>0$ by \Cref{eq:tht.bd}. 
\end{proof}

\begin{remark}\label{re:cont.bis}
  In reference to continuous sections of \Cref{eq:midpt}, i.e. choosing sphere chords bisected by $p$ continuously in $p$, the referenced literature feature a number of positive results that instantiate a broader pattern. A sampling, assuming $E$ strictly-convex Banach throughout:
  \begin{enumerate}[(a),wide]
  \item\label{item:re:cont.bis:ball.min.segm} If $\dim E\ge 2$ the midpoint map
    \begin{equation}\label{eq:big.mdpt.map}
      \left(\partial E_{\le 1}\right)^2
      \ni
      (p_1,p_2)
      \xmapsto{\quad}
      \frac{p_1+p_2}2
      \in
      E_{\le 1}
    \end{equation}
    splits \cite[Proposition 3]{MR1338691} over any set of the form $E_{\le 1}\setminus [0,q]$ with $\|q\|=1$. 

  \item\label{item:re:cont.bis:ball.min.segm.bis} Effectively the same principle drives the proof of \cite[Lemma 2]{MR229004}, which uses a splitting of \Cref{eq:big.mdpt.map} over the same type of set (ball minus radial segment), albeit assuming $E$ finite-dimensional Hilbert. 
    
  \item\label{item:re:cont.bis:shll} In like spirit, \cite[Proposition 9]{MR1204371} (to which \cite[Proposition 3]{MR1338691} relegates much of its proof) states that \Cref{eq:big.mdpt.map} splits over the entire punctured ball $E^{\times}_{\le 1}$ provided the sphere $\partial E_{\le 1}$ admits continuous self-maps with neither fixed nor negated points (i.e. $\varphi(x)\ne \pm x$). 
  \end{enumerate}
  Observe that a choice of a bisecting chord $[p_1,p_2]$, $p_i\in \partial E_{\le 1}$ for $p\in E^{\times}_{\le 1}$ determines both
  \begin{itemize}[wide]
  \item a 2-plane $\pi$ passing through $p$, namely that spanned by the origin and the $p_i$;

  \item as well as an orientation for that plane (the one, say, whereby the positive traversal of the bounding topological circle $\pi\cap \partial E_{\le 1}$ meets $p_1$, the ray through $p$ and $p_2$ in this order). 
  \end{itemize}
  Conversely, by the strict convexity which ensures (as noted in the proof of \Cref{pr:extr.pt.spn}) the uniqueness of a $(p\ne 0)$-bisected chord in a 0-centered 2-dimensional convex body, a splitting of \Cref{eq:big.mdpt.map} over $Y\subseteq E^{\times}_{\le 1}$ means \emph{precisely} a continuous section over $Y$ of the \emph{fiber bundle} (in the sense \cite[post Proposition IV.3.4]{lang-fund} of possibly infinite-dimensional Banach manifolds)
  \begin{equation}\label{eq:or.2pl.bdl}
    P
    \xrightarrowdbl{\quad\theta\quad}
    E^{\times}_{\le 1}
    ,\quad
    \theta^{-1}(p)
    :=
    \left\{\text{oriented $2$-planes through $p$}\right\}.
  \end{equation}
  That such a section exists in case \Cref{item:re:cont.bis:ball.min.segm} above, for instance, follows immediately from the contractibility of $Y:=E_{\le 1}\setminus [0,q]$ and the fact that locally trivial fiber bundles over contractible paracompact spaces are trivial \cite[Theorem 14.4.1]{td_alg-top}.

  As for \Cref{item:re:cont.bis:shll}, simply observe that a self-map $\varphi$ of $\partial E_{\le 1}$ with no fixed or negated points provides a (global) section to \Cref{eq:or.2pl.bdl}:
  \begin{equation}\label{eq:glob.sect}
    E^{\times}_{\le 1}
    \ni
    p
    \xmapsto{\quad}
    \spn\left\{p,\varphi\left(\frac{1}{\|p\|}p\right)\right\},
  \end{equation}
  with the orientation making the two vectors listed in \Cref{eq:glob.sect} into a positive basis, in that order.
\end{remark}

\pf{th:dimn.iff}
\begin{th:dimn.iff}
  The diagram
  \begin{equation*}
    \begin{tikzpicture}[>=stealth,auto,baseline=(current  bounding  box.center)]
      \path[anchor=base] 
      (0,0) node (l) {\Cref{item:th:dimn.iff:all.e.2}}
      +(2,.5) node (u) {\Cref{item:th:dimn.iff:all.e.n}}
      +(2,-.5) node (d) {\Cref{item:th:dimn.iff:1.e.2}}
      +(4,0) node (r) {\Cref{item:th:dimn.iff:1.e.n}}
      +(-4.5,0) node (ll) {\Cref{item:th:dimn.iff:dim}}
      +(-4,2) node (lul) {strong \Cref{item:th:dimn.iff:all.shl}}
      +(-1,2.5) node (luu) {strong \Cref{item:th:dimn.iff:1.shl}}
      +(-2.6,1.1) node (ld) {weak \Cref{item:th:dimn.iff:all.shl}}
      +(-.5,1.5) node (lr) {weak \Cref{item:th:dimn.iff:1.shl}}
      ;
      \draw[-implies,double equal sign distance] (l) to[bend left=6] node[pos=.5,auto] {$\scriptstyle $} (u);
      \draw[-implies,double equal sign distance] (u) to[bend left=6] node[pos=.5,auto] {$\scriptstyle $} (r);
      \draw[-implies,double equal sign distance] (l) to[bend right=6] node[pos=.5,auto,swap] {$\scriptstyle $} (d);
      \draw[-implies,double equal sign distance] (d) to[bend right=6] node[pos=.5,auto,swap] {$\scriptstyle $} (r);
      \draw[-implies,double equal sign distance] (ll) to[bend right=0] node[pos=.5,auto,swap] {$\scriptstyle \text{\cite[proof of Theorem II]{MR229004}}$} (l);
      \draw[-implies,double equal sign distance] (lul) to[bend left=6] node[pos=.5,auto,swap] {$\scriptstyle $} (luu);
      \draw[-implies,double equal sign distance] (lr) to[bend left=16] node[pos=.5,auto,swap] {$\scriptstyle $} (r);
      \draw[-implies,double equal sign distance] (lul) to[bend right=6] node[pos=.5,auto,swap] {$\scriptstyle $} (ld);
      \draw[-implies,double equal sign distance] (ld) to[bend right=6] node[pos=.5,auto,swap] {$\scriptstyle $} (lr);
      \draw[-implies,double equal sign distance] (luu) to[bend left=6] node[pos=.5,auto,swap] {$\scriptstyle $} (lr);
    \end{tikzpicture}
  \end{equation*}
  summarizes the more easily-disposed-of implications, with the unmarked arrows obvious and the labeled one relying on the fact that the proof of \cite[Theorem II]{MR229004} referenced above does not hinge crucially on $E$ being a Hilbert space (that source's standing assumption). Two other items will round out the argument.
  
  \begin{enumerate}[label={},wide]
  \item\textbf{\Cref{item:th:dimn.iff:1.e.n} $\Rightarrow$ \Cref{item:th:dimn.iff:dim}:} Reprise the proof of \Cref{th:2d.3pts}\Cref{item:th:2d.3pts:dim.constr} with the distinction that the $f_i$ now take values in the punctured ball $E^{\times}_{\le 1}$ instead of its bounding sphere. The prior argument applies to the functions
    \begin{equation*}
      X\ni
      x
      \xmapsto{\quad}
      \frac 1{\|f_i(x)\|}
      f_i(x)
      \in
      \partial E_{\le 1}
    \end{equation*}
    respectively retracting $f_i$ into the sphere. 

  \item\textbf{\Cref{item:th:dimn.iff:dim} $\Rightarrow$ strong \Cref{item:th:dimn.iff:all.shl}:} Fix a continuous $X\xrightarrow{f}E_{\le 1}$, $0<r<1$ and
    \begin{equation*}
      \text{0-symmetric strictly-convex body }
      K
      \subset
      E_{<1}
      ,\quad
      \left(E_{< 1}\setminus K\right)
      \subset E_{(r,1)},
    \end{equation*}
    and set $X_K:=f^{-1}K$. Being closed in a $(\le n-1)$-dimensional normal space and hence \cite[Corollary 3.6.3]{prs_dim_1975} again $(\le n-1)$-dimensional, $X_K$ affords a convex-combination expression
    \begin{equation}\label{eq:fxk.cvx}
      f|_{X_K}
      =
      \sum_i \lambda_i f_i
      ,\quad
      X_K\xrightarrow{\quad f_i\quad}
      \partial K
    \end{equation}
    by \cite[Corollary 7]{MR1338691} (applicable, given the \emph{strict} convexity of $K$). The selfsame strict convexity also forces
    \begin{equation*}
      \forall i
      \ :\
      f_i=f
      \text{ on }
      \partial K\subset X_K,
    \end{equation*}
    so we can simply extend all $f_i$ continuously across $X$ by $f_i:=f$ on $E_{\le 1}\setminus K$. Plainly, \Cref{eq:fxk.cvx} holds throughout $X$.  \qedhere
  \end{enumerate}
\end{th:dimn.iff}

\addcontentsline{toc}{section}{References}

\def\polhk#1{\setbox0=\hbox{#1}{\ooalign{\hidewidth
  \lower1.5ex\hbox{`}\hidewidth\crcr\unhbox0}}}
  \def\polhk#1{\setbox0=\hbox{#1}{\ooalign{\hidewidth
  \lower1.5ex\hbox{`}\hidewidth\crcr\unhbox0}}}
  \def\polhk#1{\setbox0=\hbox{#1}{\ooalign{\hidewidth
  \lower1.5ex\hbox{`}\hidewidth\crcr\unhbox0}}}
  \def\polhk#1{\setbox0=\hbox{#1}{\ooalign{\hidewidth
  \lower1.5ex\hbox{`}\hidewidth\crcr\unhbox0}}}
  \def\polhk#1{\setbox0=\hbox{#1}{\ooalign{\hidewidth
  \lower1.5ex\hbox{`}\hidewidth\crcr\unhbox0}}}
  \def\polhk#1{\setbox0=\hbox{#1}{\ooalign{\hidewidth
  \lower1.5ex\hbox{`}\hidewidth\crcr\unhbox0}}}

\Addresses

\end{document}